\begin{document}
\newtheorem{theorem}{Theorem}[section]
\newtheorem{lemma}[theorem]{Lemma}
\newtheorem{definition}[theorem]{Definition}
\newtheorem{conjecture}[theorem]{Conjecture}
\newtheorem{proposition}[theorem]{Proposition}
\newtheorem{algorithm}[theorem]{Algorithm}
\newtheorem{corollary}[theorem]{Corollary}
\newtheorem{observation}[theorem]{Observation}
\newtheorem{problem}[theorem]{Open Problem}
\newtheorem{remark}[theorem]{Remark}
\newcommand{\noin}{\noindent}
\newcommand{\ind}{\indent}
\newcommand{\om}{\omega}
\newcommand{\I}{\mathcal I}
\newcommand{\N}{{\mathbb N}}
\newcommand{\LL}{\mathbb{L}}
\newcommand{\R}{{\mathbb R}}
\newcommand{\E}{\mathbb E}
\newcommand{\Prob}{\mathbb{P}}
\newcommand{\eps}{\varepsilon}
\newcommand{\G}{{\mathcal{G}}}
\newcommand{\Bin}{\mathrm{Bin}}

\title{The set chromatic number of random graphs}

\author{Andrzej Dudek}
\address{Department of Mathematics, Western Michigan University, Kalamazoo, MI, USA}
\email{\tt andrzej.dudek@wmich.edu}

\author{Dieter Mitsche}
\address{Universit\'{e} de Nice Sophia-Antipolis, Laboratoire J-A Dieudonn\'{e}, Parc Valrose, 06108 Nice cedex 02}
\email{\texttt{dmitsche@unice.fr}}

\author{Pawe\l{} Pra\l{}at}
\address{Department of Mathematics, Ryerson University, Toronto, ON, Canada}
\email{\tt pralat@ryerson.ca}

\keywords{random graphs, set chromatic number}
\thanks{The research of the first author is supported in part by Simons Foundation Grant \#244712.}
\thanks{The research of the third author is supported in part by NSERC and Ryerson University.}
\subjclass{05C80, 
05C15, 
05C35. 
}

\maketitle

\begin{abstract}
In this paper we study the set chromatic number of a random graph $\G(n,p)$ for a wide range of $p=p(n)$. We show that the set chromatic number, as a function of $p$, forms an intriguing zigzag shape. 
\end{abstract}

\section{Introduction}\label{sec:intro}

A \textbf{proper colouring} of a graph is a labeling of its vertices with colours such that no two vertices sharing the same edge have the same colour. A colouring using at most $k$ colours is called a \textbf{proper $k$-colouring}. The smallest number of colours needed to colour a graph $G$ is called its \textbf{chromatic number}, and it is denoted by $\chi(G)$. 

In this paper we are concerned with another notion of colouring, first introduced by Chartrand et al.~\cite{CORZ}. For a given (not necessarily proper) $k$-colouring $c:V \to [k]$ of the vertex set of $G=(V,E)$,  let 
$$
C(v) = \{ c(u) : uv \in E \}
$$
be the \textbf{neighbourhood colour set} of a vertex $v$. (In this paper,  $[k]:= \{1, 2, \ldots, k\}$.) The colouring $c$ is a \textbf{set colouring} if $C(u) \neq C(v)$ for every pair of adjacent vertices in $G$. The minimum number of colours, $k$, required for such a colouring is the \textbf{set chromatic number} $\chi_s(G)$ of $G$. One can show that
\begin{equation}\label{eq:chis_trivial}
\log_2 \chi(G) + 1 \le \chi_s(G) \le \chi(G).
\end{equation}
Indeed, the upper bound is trivial, since any proper colouring~$c$ is also a set colouring: for any edge $uv$, $N(u)$, the neighbourhood of $u$, contains $c(v)$ whereas $N(v)$ does not. On the other hand, suppose that there is a set colouring using at most $k$ colours. Since there are at most $2^k$ possible neighbourhood colour sets, one can assign a unique colour to each set obtaining a proper colouring using at most $2^k$ colours. We get that 
$\chi(G) \le 2^{\chi_s(G)}$, or equivalently,  $\chi_s(G) \ge \log_2 \chi(G)$. With slightly more work, one can improve this lower bound by 1 (see~\cite{SY}), which is tight (see~\cite{GORZ}). 

\bigskip

Let us recall a classic model of random graphs that we study in this paper. The \textbf{binomial random graph} $\G(n,p)$ is the random graph $G$ with vertex set $[n]$ in which every pair $\{i,j\} \in \binom{[n]}{2}$ appears independently as an edge in $G$ with probability~$p$. Note that $p=p(n)$ may (and usually does) tend to zero as $n$ tends to infinity.  

\begin{figure}
\begin{center}
\begin{tabular}{cc}
\includegraphics[scale=0.3]{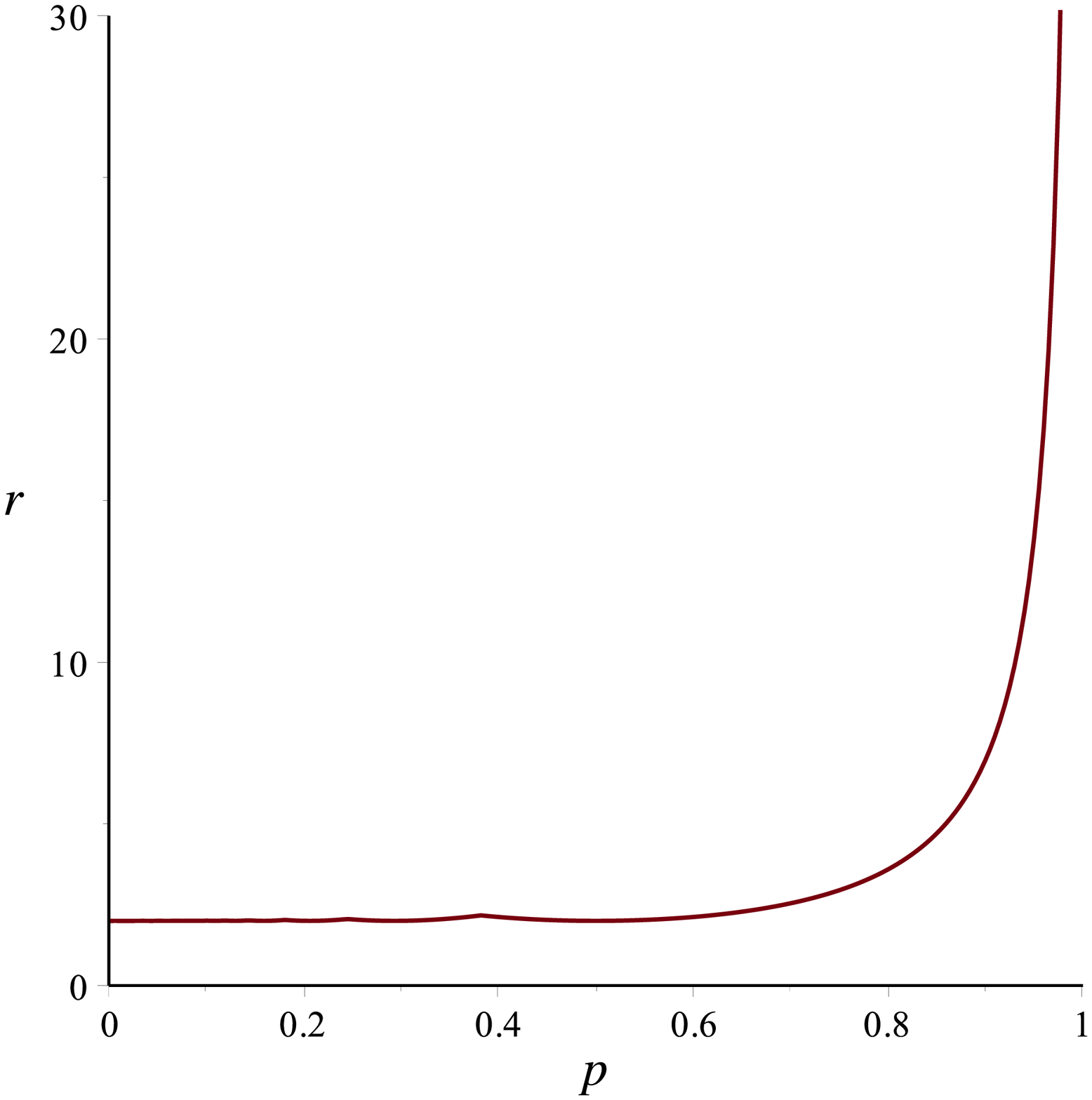}
\qquad
\includegraphics[scale=0.3]{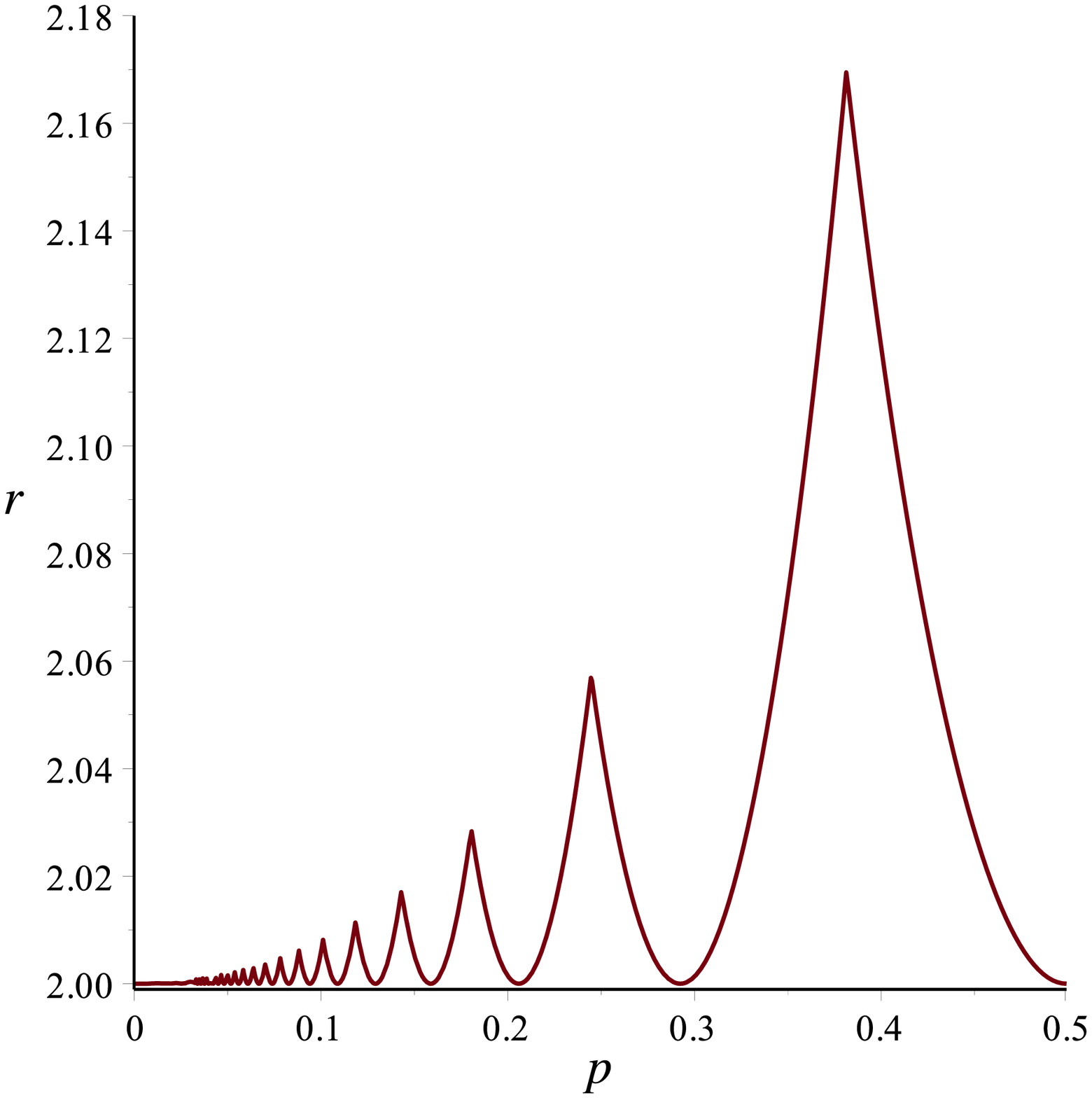} 
\end{tabular}
\end{center}
\caption{The function $r=r(p)$ for $p\in (0,1)$ and $p\in(0,1/2]$, respectively.}\label{fig1}
\end{figure}

All asymptotics throughout are as $n \rightarrow \infty $ (we emphasize that the notations $o(\cdot)$ and $O(\cdot)$ refer to functions of $n$, not necessarily positive, whose growth is bounded). We say that an event in a probability space holds \textbf{asymptotically almost surely} (or \textbf{a.a.s.}) if the probability that it holds tends to $1$ as $n$ goes to infinity. Since we aim for results that hold a.a.s., we will always assume that $n$ is large enough. We often write $\G(n,p)$ when we mean a graph drawn from the distribution $\G(n,p)$.  For simplicity, we will write $f(n) \sim g(n)$ if $f(n)/g(n) \to 1$ as $n \to \infty$ (that is, when $f(n) = (1+o(1)) g(n)$). Finally, we use \textbf{lg} to denote logarithms with base~2 and \textbf{log} to denote natural logarithms.

\bigskip

Before we state the main result of this paper, we need a few definitions that we will keep using throughout the whole paper. For a given $p=p(n)$ satisfying
$$
p \ge \frac {4}{\log 2} \cdot \frac {(\log n)(\log \log n)} {n} \hspace{1cm} \text{ and } \hspace{1cm} p \le 1-\eps
$$
for some $\eps > 0$, let
$$
s = s(p) = \min \Big\{ [(1-p)^\ell]^2 + [1-(1-p)^\ell]^2 : \ell \in \N \Big\},
$$ 
and let $\ell_0$ be a value of $\ell$ that achieves the minimum ($\ell_0$ can be assigned arbitrarily if there are at least two such values). We will show in Section~\ref{sec:upper_bound} that
\begin{equation}\label{eq:lnot}
\ell_0 \in \left\{ \left\lfloor \frac {\log(1/2)}{\log(1-p)} \right\rfloor , \left\lceil \frac {\log(1/2)}{\log(1-p)} \right\rceil \right\},
\end{equation}
and that
\begin{equation}\label{eq:sp}
\frac{1}{2} \le s(p) \le \frac{1+p^2}{2}.
\end{equation}
If $p$ is a constant, then $r=r(p)$ is defined such that $n^2 s^{r \lg n} = 1$, that is,
\begin{equation}\label{eq:r}
r = r(p) = \frac {2}{\lg(1/s)}.
\end{equation}
Observe that $r$ tends to infinity as $p \to 1$ and undergoes a ``zigzag'' behaviour as a function of $p$ (see Figure~\ref{fig1}). The reason for such a behaviour is, of course, that the function $s$ is not monotone (see Figure~\ref{fig2}). Furthermore, observe that for each $p=1-(1/2)^{1/k}$, where $k$ is a positive integer, $\ell_0=k$, $s=1/2$, and $r=2$.

\begin{figure}
\begin{center}
\begin{tabular}{cc}
\includegraphics[scale=0.3]{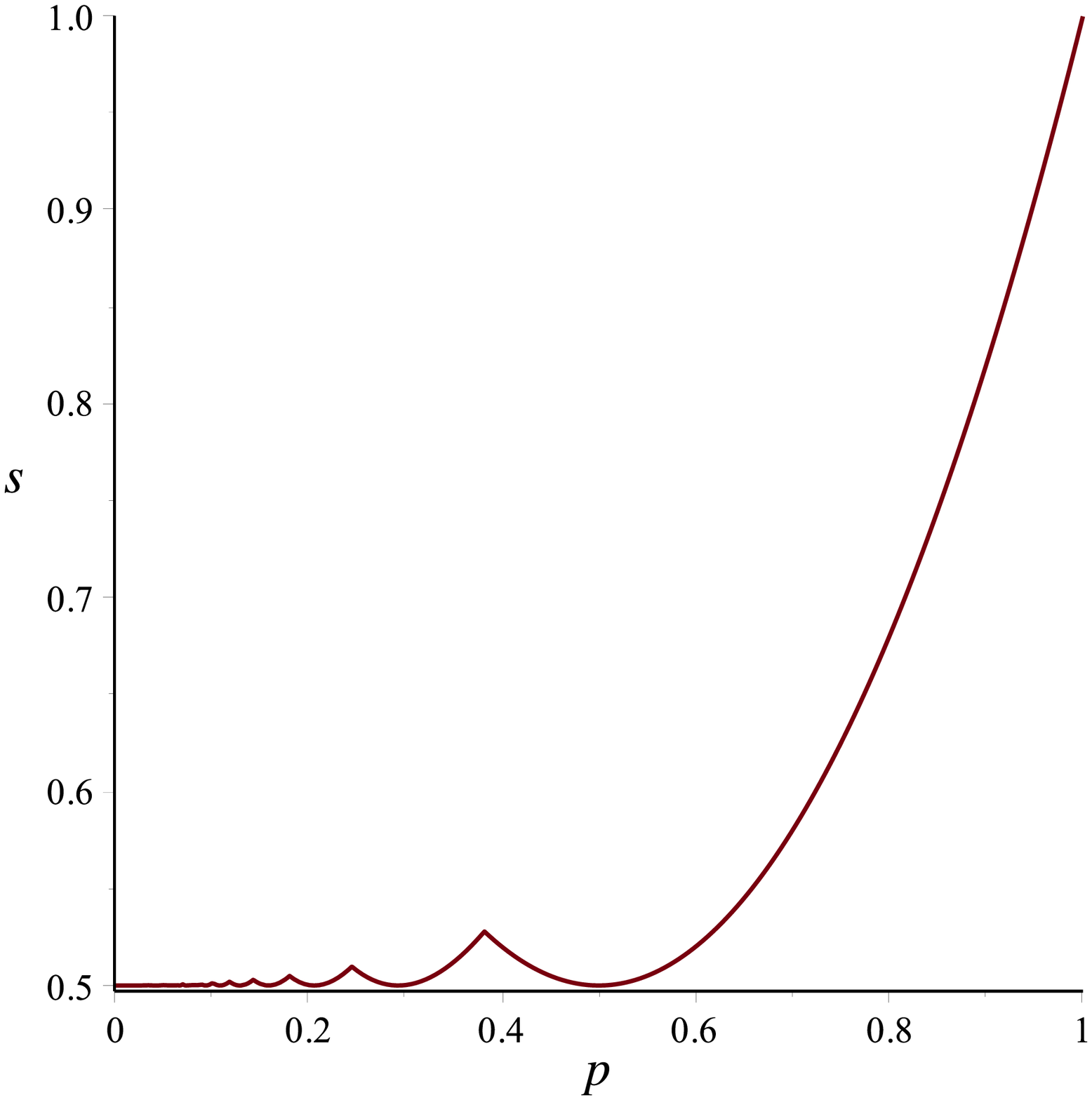}
\qquad
\includegraphics[scale=0.3]{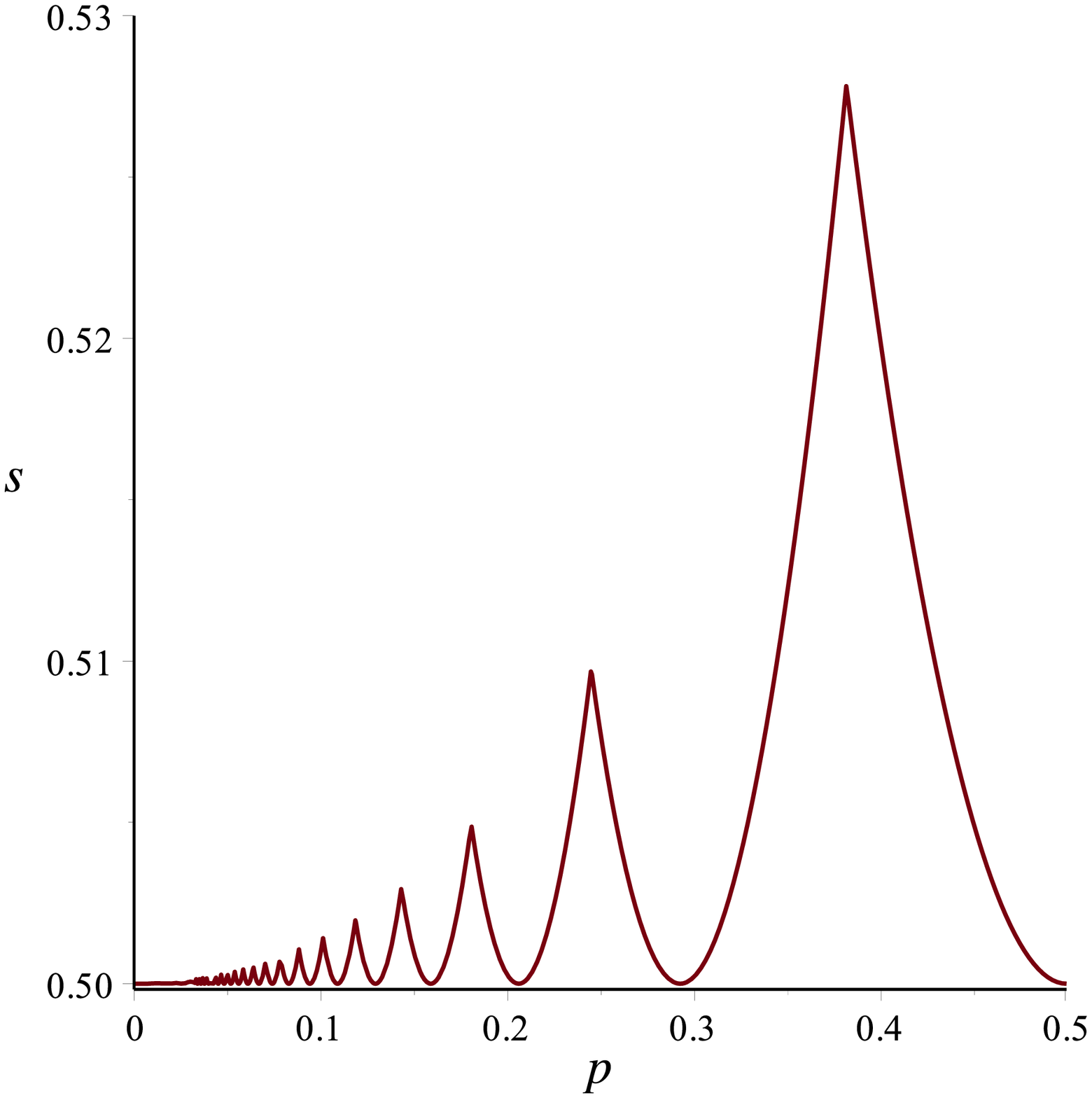} 
\end{tabular}
\end{center}
\caption{The function $s=s(p)$ for $p\in (0,1)$ and $p\in(0,1/2]$, respectively.}\label{fig2}
\end{figure}

\bigskip 

Now we state the main result of the paper.

\begin{theorem}\label{thm:main}
Suppose that $p=p(n)$ is such that 
$$
p \gg \frac {(\log n)^2 (\log (np))^2}{n} \hspace{1cm} \text{ and } \hspace{1cm} p \le 1-\eps,
$$
for some $\eps \in (0,1)$. Let $G \in \G(n,p)$. Then, the following holds a.a.s. 
\begin{enumerate}
\item[(i)] If $p$ is a constant, then
$$
\chi_s (G) \sim r \lg n.
$$
\item[(ii)] If $p=o(1)$ and $np = n^{\alpha+o(1)}$ for some $\alpha \in (0, 1]$, then 
$$
(2\alpha+o(1)) \lg n \le \chi_s (G) \le (1+\alpha+o(1)) \lg n.
$$
\item[(iii)] If $np = n^{o(1)}$, then 
$$
2 (\lg (np)-\lg \log n-\lg \log (np))  \le \chi_s (G) \le (1+o(1)) \lg n.
$$
\end{enumerate}
\end{theorem}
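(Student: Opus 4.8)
The plan is to prove the three upper bounds by one probabilistic construction, and the lower bounds by a first-moment argument over colourings; throughout one uses the hypothesis $np\gg(\log n)^2(\log(np))^2$ (which in particular forces $np\gg(\log n)^2$) and writes $|E(G)|=(1+o(1))\binom n2 p$.

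\emph{Upper bounds.} I would first record that a.a.s.\ $G\in\G(n,p)$ is ``typical'': every degree is $(1+o(1))np$ and, for every pair of vertices, $|N(u)\cup N(v)|=(1+o(1))(1-(1-p)^2)n$; both hold a.a.s.\ since $np\gg\log n$. Fixing such a $G$, I build a colouring \emph{at random}: choose pairwise disjoint sets $A_1,\dots,A_{k-1}\subseteq V$ of size $\ell_0$ uniformly at random (possible since $k\ell_0=o(n)$ in every regime), colour $A_j$ with colour $j$, and give colour $k$ to everything else. The last class has $(1-o(1))n$ vertices, so a routine union bound (using $np\gg\log n$) shows that a.a.s.\ $k\in C(v)$ for all $v$, which does no harm. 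For a fixed edge $uv$ and a fixed $j$, the probability (over the random colouring) that colour $j$ fails to distinguish $uv$ — that is, that $u$ and $v$ both have, or both lack, a neighbour in $A_j$ — is $(1-(1-p)^{\ell_0})^2+(1-p)^{2\ell_0}+o(1)=s+o(1)$, by the definition of $\ell_0$ and the concentration of $|N(v)|$ and $|N(u)\cup N(v)|$; moreover the events for distinct $j$ are essentially independent because the $A_j$ are disjoint. Hence the probability that no colour of $[k-1]$ distinguishes a fixed edge is at most $(s+o(1))^{k-1}$, and a union bound over the $|E(G)|$ edges shows that a valid set colouring exists once $k-1\ge(1+\eps')\,\lg|E(G)|/\lg(1/s)$. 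Since $\lg|E(G)|/\lg(1/s)$ equals $(1+o(1))r\lg n$ for constant $p$, and since $\lg(1/s)=1-o(1)$ when $p=o(1)$ by \eqref{eq:sp} while $\lg|E(G)|=(1+\alpha+o(1))\lg n$ in case (ii) and $(1+o(1))\lg n$ in case (iii), the three upper bounds follow.

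\emph{Lower bounds.} In a set colouring the map $v\mapsto C(v)$ is a proper colouring, so $\chi_s(G)\ge\lg\chi(G)$, and since $\chi(G)$ is a.a.s.\ large (of order $np/\log(np)$) this already supplies about half of each stated bound; the work is in gaining the rest. For that I would union over all $k$-colourings with $k$ below the target. Fix such a colouring $c$, with classes $V_1,\dots,V_k$; split them into ``large'' classes — those whose size forces, a.a.s., every vertex to have a neighbour in them — and ``small'' ones, and observe that when $k$ is small the colouring must be very unbalanced. Revealing the edges incident to the small classes fixes, for each vertex $v$, the set $C^{\mathrm{small}}(v)$ of small colours it sees; a.a.s.\ $C(v)$ is $C^{\mathrm{small}}(v)$ together with all large colours, so if $c$ is a set colouring then every part $W_T=\{v:C^{\mathrm{small}}(v)=T\}$ must be independent in $G$, whence $\Pr[\,c\text{ is a set colouring}\,]\le(1-p)^{\sum_T\binom{|W_T|}{2}}$ up to a negligible additive term. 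Now $\sum_T\binom{|W_T|}{2}$ counts the pairs $u,v$ with $C^{\mathrm{small}}(u)=C^{\mathrm{small}}(v)$; it is a ``coinciding pairs'' (balls-in-bins) statistic with mean $(1-o(1))\binom n2\prod_{i\text{ small}}\big((1-(1-p)^{|V_i|})^2+(1-p)^{2|V_i|}\big)\ge(1-o(1))\binom n2 s^{k}$, which is a fixed positive power of $n$ once $k$ is a suitable factor below the target, and I would show it concentrates in Poisson fashion, so that $\Pr[\,c\text{ is a set colouring}\,]\le\exp(-n^{\Omega(1)})$. Combining over colourings — exploiting that whether $c$ is a set colouring depends only on its small classes, so the number of genuinely different colourings is far below $k^n$ — closes the argument and gives $\chi_s(G)\ge(1-o(1))r\lg n$ for constant $p$, and the (polylog-corrected) $\approx 2\lg(np)$ bounds of cases (ii) and (iii).

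\emph{Main obstacle.} The crux is exactly this last step. A crude union bound over all $k^n$ colourings, even granting the optimistic per-colouring bound $\exp(-\E[\#\text{non-distinguished edges}])$, only yields $\chi_s(G)\gtrsim\tfrac12 r\lg n$, off by a constant factor; one genuinely needs the reduction to a small family of ``canonical'' colourings, and a threshold for ``large'' classes robust enough to survive a union bound over that family. This works cleanly when $p$ is constant — the threshold may be taken polylogarithmic and the small classes have bounded size — which is why part (i) is sharp; when $p=o(1)$ the small classes can be as large as $\tilde O(1/p)$, the relevant family has size $\exp(\tilde O(1/p))$, and the union bound closes only once $k$ has dropped to roughly $2\lg(np)$ — this is precisely the source of the gap between the lower and upper bounds in (ii) and (iii). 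Finally, one has to check the supporting a.a.s.\ facts — degree and neighbourhood-union concentration, $\chi(G)$ being large, ``large sets are dominating'', and the elementary estimates \eqref{eq:lnot}--\eqref{eq:sp} on $\ell_0$ and $s$ — with failure probabilities small enough for the union bounds in which they are used.
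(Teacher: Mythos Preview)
Your upper bound is essentially the paper's: fix $k\sim r\lg n$ colour classes of size $\ell_0$, observe that a fixed pair is not distinguished with probability at most $s^{k-O(1)}$, and apply a first-moment bound over edges. Whether one randomises the colouring after conditioning on a typical $G$ (your version) or fixes the partition before exposing $G$ (the paper's) is immaterial.

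The lower-bound skeleton --- split colours into ``small'' ($\le O(\log n/p)$) and ``large'' classes, note that large classes are dominating so only small ones distinguish, and union over the $\exp(\tilde O(\log^2 n/p))$ small-class configurations --- also matches the paper. The genuine gap is your per-colouring bound. You propose to first show that the pair count $Y=\sum_T\binom{|W_T|}{2}$ ``concentrates in Poisson fashion'' and then use $(1-p)^Y$. But for the union bound to close you need concentration of $Y$ with failure probability $\exp(-\tilde\Omega(\log^2 n/p))$, and neither Chebyshev nor Azuma--McDiarmid delivers this: changing one edge to a small class can move $Y$ by $\Theta(n)$, and the resulting bounded-differences exponent is far too small. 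The paper avoids this two-step decomposition entirely and instead bounds $\Pr[X=0]$ directly, where $X$ counts \emph{edges} with equal neighbourhood colour sets, via Suen's inequality. The work then shifts to controlling the $\Delta$ term, which is governed by the ratio
\[
\frac{\prod_i\bigl((1-p)^{3\kappa_i}+(1-(1-p)^{\kappa_i})^3\bigr)}{\prod_i\bigl((1-p)^{2\kappa_i}+(1-(1-p)^{\kappa_i})^2\bigr)};
\]
the paper's Lemma~4.1 (a H\"older-type inequality) shows this is at most $((3s-1)/(2s))^k$, which combined with $s\le(1+p^2)/2$ gives $\Delta=o(\mu)$. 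You do not identify this ingredient, and without it Suen is useless.

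There is a second issue you miss: when the fixed colouring is far from optimal (i.e.\ $p(x,y)=t^k$ with $t>s$), $\Delta$ grows faster than $\mu$ and can overtake it. The paper handles this by randomly thinning the dependency graph by a factor $(s/t)^k$ to keep $\mu$ at the right scale while shrinking $\Delta$; your proposal is silent on this case, yet it must be dealt with since the union bound ranges over \emph{all} small-class configurations, not just balanced ones.
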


\bigskip

Note that the result is asymptotically tight for dense graphs (that is, for $np = n^{1-o(1)}$; see part (i) and part (ii) for $\alpha=1$). For sparser graphs (part (ii) for $\alpha \in (0,1)$) the ratio between the upper and the lower bound is a constant that gets large for $\alpha$ small. On the other hand, the trivial lower bound of $\lg \chi(G)$ (see \eqref{eq:chis_trivial}) gives us the following: a.a.s.
$$
\chi_s(\G(n,p)) \ge \lg \chi(\G(n,p)) \sim \lg \left( \frac {pn}{2 \log (pn)} \right) \sim \alpha \lg n,
$$
provided that $pn \to \infty$ as $n \to \infty$, and $p=o(1)$; $\chi_s(\G(n,p)) \ge \lg \chi(\G(n,p)) = \Omega(1)$ otherwise (see~\cite{L,M}). So the lower bound we prove is by a multiplicative factor of $2+o(1)$ larger than the trivial one, provided that $\log (np) / \log \log n \to \infty$. If $np = \log^{C+o(1)} n$ for some $C \in [2,\infty)$, then our bound is by a factor of $2(C-1)/C + o(1)$ better than the trivial one. This seemingly small improvement is important to obtain the asymptotic behaviour in the case $\alpha = 1$, and in particular, to obtain the zig-zag for constant $p$.

\bigskip

The upper and the lower bounds are proved in Section~\ref{sec:upper_bound} and Section~\ref{sec:lower_bound}, respectively. Let us also mention that, in fact, the two bounds proved below are slightly stronger. In particular, the upper bound holds for $pn \ge (2/\log 2) (\log n) (\log \log n)$, the point where the trivial bound of $\chi(\G(n,p))$ becomes stronger. 

\section{Preliminaries}

We will use the following version of \textbf{Chernoff's bound}. Suppose that $X \in \Bin(n,p)$ is a binomial random variable with expectation $\mu=np$. If $0<\delta<1$, then 
$$
\Prob [X < (1-\delta)\mu] \le \exp \left( -\frac{\delta^2 \mu}{2} \right),
$$ 
and if $\delta > 0$,
$$
\Prob [ X > (1+\delta)\mu] \le \exp\left(-\frac{\delta^2 \mu}{2+\delta}\right).
$$
These inequalities are well known and can be found, for example, in~\cite{JLR}.

\bigskip

We will also use \textbf{Suen's inequality} that was introduced in~\cite{Suen} and revised in~\cite{Janson_Suen}. 
For a finite set $S$, let $\I = \{ (x,y) : x, y \in S, \, x \neq y \}$, and for any $(x,y) \in \I$, let $A_{x,y}$ be some event with the corresponding indicator random variable $I_{x,y}$. (In our application, $A_{x,y}$ will be the event that vertices $x$ and $y$ have the same neighbourhood colour sets.) Let $X = \sum_{(x,y) \in \I} I_{x,y}$ be the random variable counting how many such events occur. The associated \textbf{dependency graph} has $\I$ as its vertex set, and $(x_1, y_1) \sim (x_2, y_2)$ if and only if $\{ x_1, y_1 \} \cap \{ x_2, y_2 \} \neq \emptyset$. Suen's inequality asserts that
\begin{equation}\label{eq:Suen}
\Prob(X=0) \le \exp \left( - \mu + \Delta e^{2\delta} \right),
\end{equation}
where
\begin{eqnarray*}
\mu &=& \sum_{(x,y) \in \I} \Prob(A_{x,y}),\\
\Delta &=& \sum_{ (x_1, y_1) \sim (x_2, y_2)} \E[I_{x_1,y_1} I_{x_2,y_2}],\\
\delta &=& \max_{(x_1,y_1) \in \I} \sum_{(x_2, y_2) \sim (x_1, y_1) } \Prob(A_{x_2, y_2}).
\end{eqnarray*}

\section{Upper bound}\label{sec:upper_bound}
We start by proving~\eqref{eq:lnot} and~\eqref{eq:sp}. Since
\[
[(1-p)^\ell]^2 + [1-(1-p)^\ell]^2 = 2\left[ (1-p)^{\ell} -1/2\right]^2 + 1/2,
\]
it follows that $s\ge 1/2$, and consequently \eqref{eq:lnot} also holds. Now, let 
$$
\ell = \left\lceil \frac {\log(1/2)}{\log(1-p)} \right\rceil =  \frac {\log(1/2)}{\log(1-p)}  + \delta, 
$$
where $0\le \delta < 1$. Observe that
\[
s(p) \le [(1-p)^\ell]^2 + [1-(1-p)^\ell]^2
=\frac{[1-(1-p)^{\delta}]^2 + 1}{2}
\le \frac{[1-(1-p)]^2 + 1}{2}
= \frac{p^2+1}{2}.
\]
implying the upper bound in~\eqref{eq:sp}.

\bigskip

We keep the definition of function $r=r(p)$ for constant $p$ introduced above (see~(\ref{eq:r})). We extend it here for sparser graphs as follows: suppose that $p$ tends to zero as $n \to \infty$, and that $np = n^{\alpha+o(1)}$ for some $\alpha \in [0, 1]$. Then, we define $r=r(p)$ such that $n^2 p s^{r \lg n} = 1$, that is, 
$$
r = r(p) \sim 1+\alpha,
$$ 
since it follows from~\eqref{eq:sp}  that $s \sim 1/2$. 

\bigskip

The upper bound in Theorem~\ref{thm:main} follows immediately from the next lemma. 

\begin{lemma}\label{lem:upper_bound}
Suppose that $p=p(n)$ is such that 
$$
p \ge \frac {2}{\log 2} \cdot \frac {(\log n)(\log \log n)} {n} \hspace{1cm} \text{ and } \hspace{1cm} p \le 1-\eps,
$$
for some fixed $\eps \in (0,1)$. Let $G \in \G(n,p)$. Then, a.a.s.\  $\chi_s (G) \le (r+o(1)) \lg n.$
\end{lemma}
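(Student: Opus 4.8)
The plan is to exhibit, a.a.s., a set colouring of $G \in \G(n,p)$ using $k = (r+o(1))\lg n$ colours by a random colouring argument followed by Suen's inequality. Concretely, let $k = \lceil (r + \eps') \lg n \rceil$ for an arbitrarily small constant $\eps' > 0$ (and in the sparse regime the analogous choice with $r \sim 1+\alpha$), and colour each vertex of $G$ independently and uniformly at random with a colour from $[k]$. For a pair of distinct vertices $x, y$, let $A_{x,y}$ be the event that $C(x) = C(y)$; if no adjacent pair has $C(x) = C(y)$, then the colouring is a valid set colouring, so it suffices to show that $\Prob(X = 0)$ is bounded away from $0$ (in fact $\to 1$ suffices, but Suen gives us that), where $X = \sum_{x \neq y, \, xy \in E} I_{x,y}$. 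Actually, to keep the dependency structure clean it is cleanest to run Suen over all ordered pairs $\I$ with $A_{x,y}$ the event ``$xy \in E$ \emph{and} $C(x) = C(y)$'', so that $X$ counts monochromatic-neighbourhood-colour-set adjacent pairs and $X = 0$ is exactly what we want.

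The first main step is to estimate $\mu = \sum_{(x,y)} \Prob(A_{x,y})$. Fix an edge $xy$. Conditioning on the edge (probability $p$) and on which other vertices are neighbours of $x$, of $y$, or of both, the event $C(x) = C(y)$ requires that for each colour $i \in [k]$, $i$ appears in $N(x) \setminus \{y\}$ if and only if it appears in $N(y) \setminus \{x\}$. Using the standard trick of revealing the neighbourhoods vertex by vertex: a vertex $w \notin \{x,y\}$ is adjacent to $x$ only, to $y$ only, to both, or to neither. The key observation is that the ``dangerous'' configurations for colour $i$ are controlled, and a short computation shows that for a \emph{fixed} colour class structure the probability that colour $i$ does not spoil the equality, optimized over how the neighbourhoods are distributed, is governed exactly by $s(p) = \min_\ell ([(1-p)^\ell]^2 + [1 - (1-p)^\ell]^2)$ — this is precisely why $s$ enters. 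Carrying this through over all $k$ colours and summing over the $\sim \binom{n}{2} \cdot 2$ ordered adjacent pairs gives $\mu \le n^{2+o(1)} p \cdot s^{k} = n^{2+o(1)} p \cdot s^{(r+\eps')\lg n}$. By the definition of $r$ (namely $n^2 s^{r\lg n} = 1$ for constant $p$, and $n^2 p s^{r \lg n} = 1$ in the sparse case), $\mu \le n^{o(1)} s^{\eps' \lg n} = n^{o(1)} n^{-\eps' \lg(1/s)} = o(1)$, since $\lg(1/s) > 0$ is bounded away from $0$ (it is at least $\lg(2/(1+p^2)) \geq \lg(2/(1+(1-\eps)^2)) > 0$ by \eqref{eq:sp}).

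The second main step is bounding the error terms $\Delta$ and $\delta$ in Suen's inequality \eqref{eq:Suen}. The dependency graph joins $(x_1,y_1) \sim (x_2,y_2)$ when the pairs share a vertex; two such events are correlated both through the shared vertex's colour and through shared neighbours. For $\delta$ we fix a pair and sum $\Prob(A_{x_2,y_2})$ over the $O(n)$ pairs meeting it; each term is $\le p \cdot s^{k - O(1)}$ (losing a bounded factor for the few colours whose status is pre-determined by the shared vertex), so $\delta \le n \cdot p \cdot s^{k - O(1)} = n^{-1+o(1)} \cdot O(1) = o(1)$, hence $e^{2\delta} = 1 + o(1)$. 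For $\Delta$ we sum $\E[I_{x_1,y_1} I_{x_2,y_2}]$ over adjacent pairs of pairs; here both events share one vertex, say $x_1 = x_2 = z$, and the joint probability is at most $p^2$ times the probability that \emph{two} independent-ish colour-matching conditions hold, each contributing a factor $s^{k}$ up to a bounded correction from the $O(1)$ colours fixed by $z$, $y_1$, $y_2$ and the possible edge $y_1 y_2$. Thus $\Delta \le (\text{number of such configurations}) \cdot p^2 s^{2k} \cdot 2^{O(1)} = O(n^3) \cdot p^2 \cdot (n^{-2} p^{-1})^{2} \cdot n^{o(1)} = n^{-1+o(1)} = o(1)$. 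Plugging into \eqref{eq:Suen}, $\Prob(X = 0) \le \exp(-\mu + \Delta e^{2\delta}) = \exp(o(1)) \cdot \exp(\text{a term that does not go to } -\infty)$ — wait: we need $\Prob(X=0) \to$ something useful. Since in fact $\mu = o(1)$ and $\Delta e^{2\delta} = o(1)$, Suen gives $\Prob(X=0) \le 1 + o(1)$, which is useless; instead we use Suen in the complementary direction together with the first-moment bound $\Prob(X \geq 1) \le \E X = \mu = o(1)$, so $\Prob(X = 0) \ge 1 - o(1)$ directly by Markov, and Suen is not even needed for the upper bound — the first moment suffices. (Suen is the tool reserved for the lower bound in Section~\ref{sec:lower_bound}.) So the proof reduces to the first-moment computation of $\mu$ and the verification that the lower bound $p \ge \frac{2}{\log 2}(\log n)(\log\log n)/n$ is exactly what is needed to make the $n^{o(1)}$ slack in the estimate of $\mu$ legitimate (the $\log\log n$ factor controls concentration of vertex degrees via Chernoff, which is used to argue that a.a.s.\ every vertex has $\sim np$ neighbours so the per-colour probabilities are uniformly close to the extremal value defining $s$).

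The step I expect to be the main obstacle is the precise estimate of $\Prob(C(x) = C(y) \mid xy \in E)$ and, in particular, showing it is at most $(1+o(1))^{k}\, s(p)^{k}$ \emph{uniformly} — i.e. showing that the worst case over neighbourhood sizes and overlaps is governed by the single-parameter optimization defining $s$, and that the $\ell_0$ achieving the minimum is the relevant ``number of common neighbours needed to make a colour equally likely present or absent''. This requires conditioning carefully on $|N(x) \cap N(y)|$, $|N(x) \setminus N(y)|$, $|N(y)\setminus N(x)|$ — all $\sim np$ a.a.s.\ by Chernoff — and then observing that for a fixed colour $i$, $\Prob(i \in C(x) \Leftrightarrow i \in C(y))$ is minimized, as a function of how one ``spends'' the common neighbours, exactly at the expression inside the $\min$ defining $s$, with the optimal number of common neighbours being $\ell_0 \lg n = \Theta(\log n)$, which is $o(np)$ under our hypothesis on $p$ — so there are always enough common neighbours available. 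The bookkeeping to turn this into a clean bound $\mu = n^{2+o(1)} p \, s^{k}$, and to confirm \eqref{eq:lnot} pins down $\ell_0$, is the technical heart of the argument.
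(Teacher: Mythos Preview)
Your approach has a genuine gap: uniform random colouring does \emph{not} give $\Prob(C(x)=C(y)\mid xy\in E)\le s(p)^k$. The quantity $s(p)$ is defined as the minimum over $\ell$ of $[(1-p)^\ell]^2+[1-(1-p)^\ell]^2$, and this minimum is attained only when the colour class has size $\ell_0\sim \log 2/p$. Under uniform random colouring into $k=\Theta(\log n)$ colours, each colour class has size $\sim n/k$, which for constant $p$ is $\Theta(n/\log n)\gg \ell_0=O(1)$. With classes that large, every vertex a.a.s.\ has a neighbour of every colour (indeed $(1-p)^{n/k}\to 0$), so $C(x)=[k]$ for all $x$ and the colouring distinguishes nothing. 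Concretely, the per-colour agreement probability is not $s(p)$ but essentially $1$, and your first-moment bound $\mu\le n^{2+o(1)}p\,s^k$ is simply false for this colouring scheme.

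The paper's proof takes a different and much simpler route: it \emph{deterministically} chooses $k=r\lg n+\omega$ colour classes, each of size \emph{exactly} $\ell_0$ (this uses only $k\ell_0=O(\log n/p)=o(n)$ vertices), and puts all remaining vertices into one additional class. With classes of size $\ell_0$, the per-colour agreement probability is precisely $s(p)$ by definition, so for any pair $x,y$ one gets $p(x,y)\le s^{k-2}$ directly. Then $\E[\#\text{bad adjacent pairs}]\le \binom{n}{2}p\,s^{k-2}\to 0$ and Markov finishes. There is no random colouring, no Suen, no Chernoff on degrees, and no optimization over ``how one spends common neighbours''; the role of $\ell_0$ is to fix the \emph{class size}, not anything about common neighbours. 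The entire proof is a paragraph. Your later realization that Suen is unnecessary is correct, but the first-moment computation you sketch is for the wrong colouring.
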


Before we move to the proof, let us note that the lower bound for $p$ is not necessary, and the result can be extended to sparser graphs. The reason it is introduced here is that for sparser graphs, the trivial upper bound of $\chi(G)$ is stronger; note that  a.a.s.
$$
\chi_s(\G(n,p)) \le \chi(\G(n,p)) \sim \frac {pn}{2 \log (pn)},
$$
provided that $pn \to \infty$ as $n \to \infty$, and $p=o(1)$; $\chi_s(G) \le \chi(G) = O(1)$ otherwise. 

\begin{proof}
The proof is straightforward. Let $\omega = \omega(n) = o(\log n)$ be any function tending to infinity with $n$ (slowly enough). Before exposing the edges of the (random) graph $G$, we partition (arbitrarily) the vertex set into $r \lg n + \omega$ sets, each consisting of $\ell_0$ \textbf{important} vertices, and one remaining set of vertices, these being not important. (For expressions such as $r \lg n + \omega$ that clearly have to be an integer, we round up or down but do not specify which: the choice of which does not affect the argument.) Note that 
$$
(r \lg n + \omega) \ell_0 = O(\log n / p) = O(n / \log \log n) = o(n),
$$
and so there are enough vertices to perform this operation. All vertices in a given set receive the same colour, and hence the total number of colours is equal to $(r+o(1)) \lg n$.

For a given pair of vertices, $x,y$, we need to estimate from above the probability $p(x,y)$ that they have the same neighbourhood colour sets. We do it by considering sets of important vertices that neither $x$ nor $y$ belong to. Let $U$ be the set of (important) vertices of the same colour, and let $\ell_0 = |U|$. Then, either both $x$ and $y$ are not connected to any vertex from $U$, yielding the contribution $[(1-p)^{\ell_0}]^2$ to the probability $p(x,y)$, or both $x$ and $y$ are connected to at least one vertex from $U$, giving the contribution $[1-(1-p)^{\ell_0}]^2$. Thus,
$$
p(x,y) \le \Big( [(1-p)^{\ell_0}]^2 + [1-(1-p)^{\ell_0}]^2 \Big)^{r \lg n + \omega - 2} = s^{r \lg n + \omega - 2}.
$$
Hence, the expected number of pairs of adjacent vertices that are \emph{not} distinguished by their neighbourhood colour sets is at most
$$
{n \choose 2} p s^{r \lg n + \omega - 2} \sim n^2 p s^{r \lg n}  \cdot \frac{s^{\omega-2}}{2} = \frac{s^{\omega-2}}{2},
$$
where the last equality follows from the definition of $r$. Finally, by~\eqref{eq:sp}, we get that $s(p) \le (p^2+1)/2 \le ((1-\eps)^2+1)/2<1$ and so $s^{\omega-2}/2$ tends to zero as $n\to \infty$. Hence, the lemma follows by Markov's inequality.
\end{proof}

\section{Lower bound}\label{sec:lower_bound}

Before we move to the proof of the lower bound, we need the following technical lemma. 
\begin{lemma}\label{lem:ratio}
Let $1/2 \le x, y\le 1$ and $\beta_x, \beta_y$ be any positive real numbers. Then, there exist unique $s$ and $z$ such that $1/2\le s,z \le 1$ and
\begin{equation}\label{eq:lem:ratio:eq}
(x^2 + (1-x)^2)^{\beta_x} (y^2 + (1-y)^2)^{\beta_y} = (z^2 + (1-z)^2)^{\beta_x + \beta_y} = s^{\beta_x + \beta_y}.
\end{equation}
Moreover, 
\begin{equation}\label{eq:lem:ratio:ineq}
\frac {(x^3 + (1-x)^3)^{\beta_x} (y^3 + (1-y)^3)^{\beta_y}}  {(x^2 + (1-x)^2)^{\beta_x} (y^2 + (1-y)^2)^{\beta_y}}
\le \frac {(z^3 + (1-z)^3)^{\beta_x + \beta_y}} {(z^2 + (1-z)^2)^{\beta_x + \beta_y}} 
= \left( \frac {3s-1}{2s} \right)^{\beta_x + \beta_y}.
\end{equation}
\end{lemma}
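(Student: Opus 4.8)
The plan is to reduce both claims to properties of the single-variable function $f(t) = t^2 + (1-t)^2$ on $[1/2,1]$ and its "cubic analogue" $g(t) = t^3 + (1-t)^3$. First I would establish existence and uniqueness of $z$ (and hence $s$): observe that $f$ is a continuous, strictly increasing bijection from $[1/2,1]$ onto $[1/2,1]$, since $f(t) = 2(t-1/2)^2 + 1/2$. Therefore the quantity $s := \big[(x^2+(1-x)^2)^{\beta_x}(y^2+(1-y)^2)^{\beta_y}\big]^{1/(\beta_x+\beta_y)}$ is a weighted geometric mean of two numbers in $[1/2,1]$, hence lies in $[1/2,1]$; and $z := f^{-1}(s)$ is the unique value in $[1/2,1]$ with $f(z) = s$, which is exactly \eqref{eq:lem:ratio:eq}. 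The final equality in \eqref{eq:lem:ratio:ineq} is then just algebra: with $s = z^2+(1-z)^2$ one checks directly that $z^3+(1-z)^3 = (3s-1)/2$ (expand $g(z)$ in terms of $z(1-z)$ and use $f(z) = 1 - 2z(1-z)$), so the ratio $g(z)/f(z)$ equals $(3s-1)/(2s)$, and its $(\beta_x+\beta_y)$-th power gives the stated closed form.

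The substantive part is the inequality in \eqref{eq:lem:ratio:ineq}. Dividing through by the right-hand side, and writing everything in terms of the ratio $h(t) := g(t)/f(t) = (3f(t)-1)/(2f(t))$, it suffices to show
\[
\Big(\tfrac{h(x)}{h(z)}\Big)^{\beta_x}\Big(\tfrac{h(y)}{h(z)}\Big)^{\beta_y} \le 1,
\]
i.e.\ $\beta_x \log h(x) + \beta_y \log h(y) \le (\beta_x+\beta_y)\log h(z)$. Since $h(t) = \frac{3}{2} - \frac{1}{2f(t)}$ is an increasing function of $f(t)$, and $\log$ is increasing, the map $f \mapsto \log h$ is most naturally handled by expressing $\log h$ as a function of $u := f(t) \in [1/2,1]$: set $\phi(u) := \log\!\big(\tfrac{3}{2} - \tfrac{1}{2u}\big) = \log(3u-1) - \log(2u)$. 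Then, writing $u_x = f(x)$, $u_y = f(y)$, $u_z = f(z)$, the definition of $s$ gives $\beta_x \log u_x + \beta_y \log u_y = (\beta_x+\beta_y)\log u_z$, i.e.\ $\log u_z$ is the $\beta$-weighted average of $\log u_x, \log u_y$. So the inequality we want is precisely the statement that $\phi$, viewed as a function of $\log u$, is concave; equivalently that $\psi(v) := \phi(e^v) = \log(3e^v - 1) - \log 2 - v$ is concave in $v$ on the relevant range $v \in [\log(1/2), 0]$. This is a one-variable calculus check: $\psi'(v) = \frac{3e^v}{3e^v-1} - 1 = \frac{1}{3e^v-1}$, which is strictly decreasing in $v$ (since $3e^v - 1$ is increasing and positive for $v \ge \log(1/2)$, where $3e^v - 1 \ge 1/2 > 0$), so $\psi'' < 0$ and $\psi$ is concave. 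Jensen's inequality applied to the concave $\psi$ with weights $\beta_x/(\beta_x+\beta_y)$, $\beta_y/(\beta_x+\beta_y)$ at the points $\log u_x$, $\log u_y$ then yields the claim.

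I expect the main obstacle — really the only point requiring care — to be packaging the reduction cleanly: recognizing that $s$ is forced to be a weighted \emph{geometric} mean of $f(x)$ and $f(y)$, hence that "$\log s$ is a weighted arithmetic mean of $\log f(x), \log f(y)$", so that the target inequality is exactly a concavity/Jensen statement for the function $v \mapsto \log\big(\frac{3}{2} - \frac{1}{2}e^{-v}\big)$ in the variable $v = \log f(t)$. Once that reformulation is in place, everything else is routine: the bijectivity of $f$ on $[1/2,1]$, the identity $g(z) = (3s-1)/2$, and the sign of a second derivative. One should also remember to note $3u - 1 > 0$ throughout, which holds since $u = f(t) \ge 1/2$, so all logarithms are well defined and the endpoint cases $x = 1/2$ or $y = 1/2$ (where some factor equals $1/2$) cause no trouble.
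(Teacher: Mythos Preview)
Your proof is correct. The existence/uniqueness argument and the closed-form identity $g(z)=(3s-1)/2$ match the paper's exactly. For the inequality, however, you take a genuinely different route: you recast it as a Jensen inequality for the function $\psi(v)=\log(3e^{v}-1)-v-\log 2$ in the variable $v=\log f(t)$, observing that the defining relation for $z$ says precisely that $\log f(z)$ is the $\beta$-weighted arithmetic mean of $\log f(x)$ and $\log f(y)$, and then checking concavity via $\psi'(v)=1/(3e^{v}-1)$. The paper instead writes $a=f(x)$, $b=f(y)$, uses the same identity $t^3+(1-t)^3=(3(t^2+(1-t)^2)-1)/2$ to reduce the claim to
\[
(3a)^{1/p}(3b)^{1/q}\ \ge\ (3a-1)^{1/p}(3b-1)^{1/q}+1,\qquad \tfrac1p+\tfrac1q=1,
\]
and reads this off directly from H\"older's inequality applied to the vectors $\big((3a-1)^{1/p},1\big)$ and $\big((3b-1)^{1/q},1\big)$. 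Both arguments are short and elementary; yours has the advantage of making the underlying convexity structure explicit (and would generalise immediately to more than two factors without an induction), while the paper's avoids any calculus by invoking a named inequality off the shelf.
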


\bigskip

\noindent
The lemma can be inductively applied to obtain the following corollary.

\begin{corollary}\label{cor:ratio}
Let $1/2 \le x_1, x_2, \dots, x_k \le 1$. Then, there exist unique $s$ and $z$ such that $1/2 \le s,z \le 1$ and
$$
\prod_{i=1}^{k} (x_i^2 + (1-x_i)^2) = (z^2 + (1-z)^2)^k = s^k.
$$
Moreover, 
$$
\frac {\prod_{i=1}^{k} (x_i^3 + (1-x_i)^3)}{\prod_{i=1}^{k} (x_i^2 + (1-x_i)^2)}
 \le \frac {(z^3 + (1-z)^3)^{k}} {(z^2 + (1-z)^2)^k}= \left( \frac {3s-1}{2s} \right)^k.
$$
\end{corollary}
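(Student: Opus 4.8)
The plan is to deduce the corollary from Lemma~\ref{lem:ratio} by induction on $k$, after first settling the existence and uniqueness assertion directly. For any $x\in[1/2,1]$ one has $x^2+(1-x)^2=2(x-1/2)^2+1/2\in[1/2,1]$, so the product $P:=\prod_{i=1}^{k}(x_i^2+(1-x_i)^2)$ lies in $[(1/2)^k,1]$, and $s:=P^{1/k}$ is the unique positive real with $s^k=P$; moreover $s\in[1/2,1]$. For $z$, note that $g(z):=z^2+(1-z)^2=2z^2-2z+1$ satisfies $g'(z)=4z-2\ge 0$ on $[1/2,1]$, with equality only at $z=1/2$, so $g$ is a strictly increasing bijection of $[1/2,1]$ onto $[g(1/2),g(1)]=[1/2,1]$; hence there is a unique $z\in[1/2,1]$ with $g(z)=s$, and then $(z^2+(1-z)^2)^k=s^k=P$. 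Finally, from $g(z)=s$ one gets $z(1-z)=(1-s)/2$, and since $z^3+(1-z)^3=(z^2+(1-z)^2)-z(1-z)=s-(1-s)/2=(3s-1)/2$, the closed form $\frac{z^3+(1-z)^3}{z^2+(1-z)^2}=\frac{3s-1}{2s}$ follows; this takes care of the last equality in the displayed inequality once the main inequality is established.

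For the main inequality I would induct on $k$. The base case $k=1$ is an equality: with $z=x_1$ and $s=x_1^2+(1-x_1)^2$, both sides equal $\frac{x_1^3+(1-x_1)^3}{x_1^2+(1-x_1)^2}$. For the inductive step with $k\ge 2$, apply the inductive hypothesis to $x_1,\dots,x_{k-1}$ to obtain the associated $s',z'\in[1/2,1]$ with $\prod_{i=1}^{k-1}(x_i^2+(1-x_i)^2)=(z'^2+(1-z')^2)^{k-1}$ and $\frac{\prod_{i=1}^{k-1}(x_i^3+(1-x_i)^3)}{\prod_{i=1}^{k-1}(x_i^2+(1-x_i)^2)}\le\frac{(z'^3+(1-z')^3)^{k-1}}{(z'^2+(1-z')^2)^{k-1}}$. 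Now apply Lemma~\ref{lem:ratio} with $x:=z'$, $y:=x_k$, $\beta_x:=k-1$, $\beta_y:=1$: it produces $s,z\in[1/2,1]$ with $(z'^2+(1-z')^2)^{k-1}(x_k^2+(1-x_k)^2)=(z^2+(1-z)^2)^k=s^k$, and by the uniqueness just established these are exactly the $s,z$ attached to $x_1,\dots,x_k$. Inequality \eqref{eq:lem:ratio:ineq} then gives $\frac{(z'^3+(1-z')^3)^{k-1}}{(z'^2+(1-z')^2)^{k-1}}\cdot\frac{x_k^3+(1-x_k)^3}{x_k^2+(1-x_k)^2}\le\bigl(\frac{3s-1}{2s}\bigr)^{k}$. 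Multiplying the inductive hypothesis through by the (strictly positive) factor $\frac{x_k^3+(1-x_k)^3}{x_k^2+(1-x_k)^2}$ and chaining with this bound yields $\frac{\prod_{i=1}^{k}(x_i^3+(1-x_i)^3)}{\prod_{i=1}^{k}(x_i^2+(1-x_i)^2)}\le\bigl(\frac{3s-1}{2s}\bigr)^{k}=\frac{(z^3+(1-z)^3)^k}{(z^2+(1-z)^2)^k}$, which is the claim.

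I do not expect a genuine obstacle: the argument is essentially bookkeeping around Lemma~\ref{lem:ratio}. The one point that needs care is the identification, in the inductive step, of the pair $(s,z)$ delivered by the lemma with the pair $(s,z)$ named in the corollary's statement; this is precisely where the uniqueness clause of the lemma (equivalently, the direct argument of the first paragraph) is used. One should also record that each factor $x_i^3+(1-x_i)^3$ is strictly positive, since $x_i\ge 1/2$, so that multiplying inequalities through by such factors preserves their direction.
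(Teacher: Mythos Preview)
Your proposal is correct and follows exactly the approach the paper indicates: the paper states only that ``the lemma can be inductively applied to obtain the following corollary,'' and your induction on $k$ with $\beta_x=k-1$, $\beta_y=1$ is the natural way to carry this out. Your care in identifying the pair $(s,z)$ produced by Lemma~\ref{lem:ratio} with the pair in the corollary via uniqueness, and in noting positivity of the factors, fills in precisely the details the paper omits.
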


\bigskip

\begin{proof}[Proof of Lemma~\ref{lem:ratio}]
Let $1/2 \le x, y\le 1$ and $\beta_x, \beta_y$ be fixed positive real numbers.  First we show that there exist unique numbers $z$ and $s$ satisfying~\eqref{eq:lem:ratio:eq}. Since $f(t):=t^2+(1-t)^2$ is increasing on $[1/2,1]$, we get 
\[
(f(1/2))^{\beta_x+\beta_y} \le (x^2 + (1-x)^2)^{\beta_x} (y^2 + (1-y)^2)^{\beta_y} \le (f(1))^{\beta_x+\beta_y}.
\]
Clearly, $(f(t))^{\beta_x+\beta_y}$ is also increasing and continuous on $t\in [1/2,1]$, and thus, there is a unique real number $z$ 
such that $1/2\le z \le 1$ and
\[
(x^2 + (1-x)^2)^{\beta_x} (y^2 + (1-y)^2)^{\beta_y} = (f(z))^{\beta_x+\beta_y}.
\]
To finish the proof of~\eqref{eq:lem:ratio:eq}, set $s = z^2+(1-z)^2$ and observe that $1/2 \le s \le 1$, since $1/2 \le z \le 1$.

Now we move to the proof of~\eqref{eq:lem:ratio:ineq}. Let
\[
a = x^2 + (1-x)^2 \quad \text{ and } \quad b=y^2 + (1-y)^2.
\]
Since for every real number $t$, 
\begin{equation}\label{eq:lem:ratio:t}
t^3 + (1-t)^3 = \frac{3(t^2+(1-t)^2) - 1}{2},
\end{equation}
we get
\[
x^3 + (1-x)^3 = \frac{3a-1}{2} \quad \text{ and } \quad  y^3 + (1-y)^3 = \frac{3b-1}{2}.
\]
Furthermore, 
\[
z^2 + (1-z)^2 = (x^2 + (1-x)^2)^{\frac{\beta_x}{\beta_x + \beta_y}} (y^2 + (1-y)^2)^{\frac{\beta_y}{\beta_x + \beta_y}} = a^{\frac{\beta_x}{\beta_x + \beta_y}} b^{\frac{\beta_y}{\beta_x + \beta_y}},
\]
and so
\[
z^3 + (1-z)^3 = \frac{ 3a^{\frac{\beta_x}{\beta_x + \beta_y}} b^{\frac{\beta_y}{\beta_x + \beta_y}}    -1}{2}.
\]
In order to show the inequality in~\eqref{eq:lem:ratio:ineq} it suffices to prove
\[
(z^3 + (1-z)^3)^{\beta_x + \beta_y} \ge (x^3 + (1-x)^3)^{\beta_x} (y^3 + (1-y)^3)^{\beta_y},
\]
which is equivalent to 
\[
\left( \frac{ 3a^{\frac{\beta_x}{\beta_x + \beta_y}} b^{\frac{\beta_y}{\beta_x + \beta_y}}    -1}{2} \right)^{\beta_x + \beta_y}
\ge \left( \frac{3a-1}{2} \right)^{\beta_x} \left( \frac{3b-1}{2} \right)^{\beta_y}
\]
and subsequently to 
\begin{equation}\label{eq:hoelder1}
(3a)^{\frac{\beta_x}{\beta_x + \beta_y}} (3b)^{\frac{\beta_y}{\beta_x + \beta_y}}    -1
\ge (3a-1)^{\frac{\beta_x}{\beta_x+\beta_y}} (3b-1)^{\frac{\beta_y}{\beta_x+\beta_y}}.
\end{equation}

Set 
\[
p = \frac{\beta_x + \beta_y}{\beta_x} \quad \text{ and } \quad q = \frac{\beta_x+\beta_y}{\beta_y}.
\]
Now, showing~\eqref{eq:hoelder1} (and hence also \eqref{eq:lem:ratio:ineq}) is equivalent to showing
\[
(3a)^{\frac{1}{p}} (3b)^{\frac{1}{q}}  
\ge (3a-1)^{\frac{1}{p}} (3b-1)^{\frac{1}{q}} + 1.
\]
The latter inequality immediately from H\"older's inequality (see, for example, \cite{HLP}): indeed, let
\[
a_1 = (3a-1)^{\frac{1}{p}}, \quad b_1 = (3b-1)^{\frac{1}{q}} \quad \text{ and } \quad a_2=b_2=1.
\]
(Observe that $a_1$ and $b_1$ are well-defined since $3a-1>0$ and $3b-1>0$.)
Then, since $1/p + 1/q = 1$, $p>1$, and $q>1$, H\"older's inequality yields
\[
(a_1^p + a_2^p)^{\frac{1}{p}} (b_1^q + b_2^q)^{\frac{1}{q}}
\ge a_1b_1+ a_2b_2,
\]
as required.
Finally note that the equality in~\eqref{eq:lem:ratio:ineq} follows from~\eqref{eq:lem:ratio:t} applied with $s = z^2 + (1-z)^2$. The proof of the lemma is finished.
\end{proof}

\bigskip

As we did in the previous section, we keep the definition of the function $r=r(p)$ for constant $p$ (see~(\ref{eq:r})). We extend it here for sparser graphs as follows (in a different way than in the previous section): suppose that $p$ tends to zero as $n \to \infty$ and that $np = n^{\alpha+o(1)}$ for some $\alpha \in [0, 1]$. This time, $r=r(p)$ is defined such that $(np)^2 s^{r \lg n} = (\log^2 n)(\log^2 (np))$, that is, 
$$
r = r(p) = \frac {-2 (\lg (np)-\lg \log n-\lg \log (np))}{(\lg n) (\lg s)} \ge \frac {2 (\lg (np)-\lg \log n-\lg \log (np))}{\lg n} \sim 2\alpha,
$$ 
since $s\ge 1/2$.
 
\bigskip

Now we are ready to come back to the proof of the lower bound. The lower bound in Theorem~\ref{thm:main} follows immediately from the following lemma. 

\begin{lemma}\label{lem:lower_bound}
Suppose that $p=p(n)$ is such that 
$$
p \gg \frac{(\log n)^2(\log (np))^2}{n} \hspace{1cm} \text{ and } \hspace{1cm} p \le 1-\eps,
$$
for some fixed $\eps \in (0, 1)$. Let $G \in \G(n,p)$. Then, a.a.s.\ $\chi_s (G) \ge r \lg n,$ provided that $p=o(1)$, and $\chi_s (G) \ge (1+o(1)) r \lg n$ otherwise. 
\end{lemma}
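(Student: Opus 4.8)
The plan is to show that, a.a.s., no colouring of $[n]$ with $k$ colours is a set colouring of $G$, where $k=\lfloor r\lg n\rfloor-\omega$ for an arbitrarily slowly growing $\omega=\omega(n)\to\infty$ when $p=o(1)$, and $k=\lfloor(1-\eta)r\lg n\rfloor$ for an arbitrary fixed $\eta\in(0,1)$ when $p$ is bounded away from $0$ (in the latter case this already suffices, since together with Lemma~\ref{lem:upper_bound} it pins down $\chi_s(G)$ up to a factor $1+o(1)$). Fix such a colouring $c$ with colour classes $V_1,\dots,V_k$ of sizes $n_1\ge\dots\ge n_k$, and set $x_\ell=(1-p)^{n_\ell}$ and $q_\ell=x_\ell^2+(1-x_\ell)^2$; by the computation behind~\eqref{eq:sp} we have $q_\ell\ge s(p)\ge 1/2$. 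Let $s=s(c)$ be the geometric mean of the $q_\ell$, so that $\prod_{\ell=1}^kq_\ell=s^k$ with $1/2\le s\le 1$, and note that since $n_1\ge n/(r\lg n)\gg 1/p$ we have $x_1=o(1)$ and hence $q_1=1-o(1)$.

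The engine is Suen's inequality~\eqref{eq:Suen}. Pick a set $W\subseteq V_1$ of $w$ vertices (the size $w\le n_1$ is chosen at the end), let $\I$ be the set of ordered pairs $(x,y)$ of distinct vertices of $W$ (these automatically share a colour), and let $A_{x,y}=\{xy\in E(G)\}\cap\{C(x)=C(y)\}$; thus $X>0$ certifies that $c$ is not a set colouring. Since edges from a fixed vertex into a fixed colour class are independent of each other and of the edge $xy$, one computes $\mu=\Theta(p\,w^2 s^k)$ and $\delta=\Theta(p\,w\,s^k)$; and because any pair of pairs contributing to $\Delta$ must lie inside a single colour class (a shared vertex forces a common colour), $\Delta$ is governed by the cubic quantities $x_\ell^3+(1-x_\ell)^3=\tfrac12(3q_\ell-1)$, so that by Corollary~\ref{cor:ratio} (applied with $x_i=x_\ell$) and the elementary bound $\tfrac{2q}{3q-1}\le 2$ on $[1/2,1]$,
\[
\Delta=O\!\left(p^2 w^3\left(\tfrac{3s-1}{2}\right)^{\!k}\right),\qquad\text{hence}\qquad \frac{\Delta}{\mu}=O\!\left(p\,w\left(\tfrac{3s-1}{2s}\right)^{\!k}\right).
\]
Since $\tfrac{3s-1}{2s}\le 1$ for all $s\in[1/2,1]$, with equality only at $s=1$, I would take $w$ as large as the requirements $p\,w\,s^k=O(1)$ and $p\,w\,(\tfrac{3s-1}{2s})^k=o(1)$ allow, i.e.\ $w=\Theta\bigl(\min\{s^{-k},(\tfrac{2s}{3s-1})^k\}/p\bigr)$, subject to $w\le n_1$. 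For such $w$, Suen's inequality gives $\Prob(X=0)\le\exp(-\mu/2)$, and the definition of $r$ is exactly calibrated so that $\mu=\Theta(p\,w^2 s^k)$ tends to infinity: from $n^2p\,s^{r\lg n}=(\log n\log(np))^2$ (respectively $n^2p\,s^{r\lg n}=1$ for constant $p$) and $k<r\lg n$, the surplus in the exponent forces $\mu$ past any prescribed slowly growing function — and, crucially, past the logarithm of the number of colourings that genuinely have to be considered.

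That last point is where the real difficulty lies, since a crude union bound over all $k^n$ colourings is hopelessly lossy. The resolution I would pursue is to classify colourings by the sizes of their classes: a class of size at least $C\log n/p$ is \emph{large}. A single a.a.s.\ event — Chernoff's bound combined with a union bound that is affordable because there are at most $2^n$ vertex sets and the relevant exponent beats $n$ — shows that, uniformly over \emph{all} colourings, every large class is adjacent to all but $o(n)$ vertices; and a.a.s.\ $\alpha(G)\le(1+o(1))\tfrac{2\log(np)}{p}$, uniformly over induced subgraphs. Hence, in any putative set colouring, the neighbourhood colour sets of the $(1-o(1))n$ vertices adjacent to every large class depend only on the \emph{small} classes, so the restriction of $v\mapsto C(v)$ to these vertices is a proper colouring of an induced subgraph on $(1-o(1))n$ vertices using at most $2^{\#\text{small classes}}$ colours; at the same time the small classes jointly span only $\mathrm{poly}\log n/p$ vertices, so the number of ways the vertices can be distributed among them is at most $\exp(O(\mathrm{poly}\log n/p))$. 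It is over this sub‑exponential family, rather than over all $k^n$ colourings, that the Suen estimate above is summed, and this is precisely what allows $k$ to be pushed up to $r\lg n$.

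The main obstacle, then, is not any single inequality but the simultaneous balancing of two competing demands: on the one hand one needs $s(c)$-dependent control of the Suen error terms $\Delta e^{2\delta}$ (handled via Corollary~\ref{cor:ratio} and the choice of $w$), and on the other hand one needs $\mu$ to grow at least as fast as $\mathrm{poly}\log n/p$ so that it can absorb the logarithm of the family of colourings that survive the reduction of the previous paragraph. Making these two requirements compatible for \emph{all} $k<r\lg n$ is exactly what dictates the hypothesis $p\gg(\log n)^2(\log(np))^2/n$ and the appearance of $(\log n\log(np))^2$ in the definition of $r$ for the sparse range; the clean case $p=o(1)$ (where $s(c)=1/2+o(1)$ automatically, so $\tfrac{3s-1}{2s}=1/2+o(1)$) is what yields the sharper bound $\chi_s(G)\ge r\lg n$, while for $p$ bounded away from $0$ the various $1+o(1)$ losses force one to settle for $\chi_s(G)\ge(1+o(1))r\lg n$.
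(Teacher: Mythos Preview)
Your architecture---reduce to ``small'' colour classes via a domination argument, then bound the failure probability for each small-class configuration by Suen and union-bound over the $\exp(O((\log n)^2\log(np)/p))$ configurations---matches the paper's, and your use of Corollary~\ref{cor:ratio} to control $\Delta/\mu$ is the right idea. But the execution has two genuine gaps.

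First, your Suen setup and your union bound do not fit together. You place $W$ inside $V_1$ (the largest class, hence a \emph{large} one) and define $A_{x,y}$ via the full colour sets $C(x),C(y)$; these events depend on the entire colouring, not just on the small classes, so you cannot union only over small-class assignments. The paper instead takes $x,y$ outside $I\cup U$ (so outside all small classes and outside the $o(n)$ vertices possibly missed by some large class) and lets $A_{x,y}$ be the event that the \emph{important-colour} neighbourhood sets agree; then the events depend only on edges into $I$, and the union bound over $(c,U)$ is legitimate. This also means the relevant parameter is $t^{k}=\prod_{\text{small }\ell}q_\ell$, not the product over all $\ell$.

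Second, your claim that ``$s(c)=1/2+o(1)$ automatically when $p=o(1)$'' conflates $s(p)$ with $s(c)$: it is $s(p)$ that equals $1/2+O(p^2)$, whereas $t=s(c)$ (even restricted to small classes) can be anywhere in $[s(p),1)$---for instance, small classes of size $1$ give $q_\ell=1-2p+2p^2\to 1$. Your single recipe $w=\Theta\bigl(\min\{s^{-k},(2s/(3s-1))^k\}/p\bigr)$ cannot cope with this range: when $t\to 1$ both quantities are $O(1)$, so $w=O(1/p)$ and $\mu=O(1/p)$, far short of the $(\log n)^2(\log(np))^{3/2}/p$ needed to beat the configuration count; and for intermediate $t$ the ratio $\Delta/\mu=\Theta\bigl(pw((3t-1)/(2t))^k\bigr)$ is only $O(1)$ at your choice of $w$, not $o(1)$, so Suen gives nothing. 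The paper resolves this with a genuine case split: for $t\ge u$ (where $(u/s)^k=pn$) it abandons Suen altogether, extracts a matching of size $\Theta(n)$ in $G[V\setminus(I\cup U)]$, and applies Chernoff to the now \emph{independent} events; for $s<t<u$ it first conditions on $G[V\setminus(I\cup U)]$ having $\ge n^2p/4$ edges and maximum degree $\le 2pn$, then randomly thins $\I$ by a factor $\xi=(s/t)^k$ so that $\mu$ stays of the right order while the maximum degree in the dependency graph drops to $O(\xi pn\log n)$, and only then applies Suen. Your sketch is missing both devices.
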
 

\begin{proof}
First, let us note that, since the expected degree tends to infinity faster than $(\log^2 n)(\log^2(np))$, it follows immediately from Chernoff's bound and the union bound that a.a.s.\ all vertices have degree at most, say, $2pn$. Hence, since we aim for a statement that holds a.a.s., we may assume that the maximum degree of $G$ is at most $2pn$. In fact, the argument is slightly more delicate and will be explained soon. 
\medskip

Suppose that we are given a colouring of the vertices. We partition all colours into \textbf{important} and \textbf{unimportant} ones: a colour is important if the number of vertices of that colour is at most $2 \log n / p$. 
First, let us show that unimportant colours can distinguish only a few edges. Formally, we claim the following:\\ \\ 
\textbf{Claim:} A.a.s.\ each set of $2 \log n / p$ vertices dominates all but at most $2 \log n / p$ vertices. \\ \\
\textit{Proof of the claim.}  Note that the expected number of pairs of disjoint sets of size $2 \log n / p$ with no edge between them is at most
\begin{eqnarray*}
{ n \choose \frac {2 \log n}{p} }^2 (1-p)^{ (2 \log n / p)^2 } &\le& \left( \frac {nep}{2 \log n} \right)^{4 \log n / p} \exp \left( - \frac {4 \log^2 n}{p} \right)\\
&=& o \left( n^{4 \log n / p} \exp \left( - \frac {4 \log^2 n}{p} \right) \right) = o(1).
\end{eqnarray*}
The claim follows from the first moment method. $\hfill \square$ 

\medskip

Hence, if $p$ is  constant, then $O(\log n)$ unimportant colours can distinguish only $O(\log^2 n) = o(n)$ vertices. All remaining vertices will have \emph{all} unimportant colours present in their neighbourhood colour sets; as a result, no edge in the graph induced by these vertices can be distinguished by unimportant colours. On the other hand, if $p=o(1)$, then at most $(2+o(1)) \lg (np) \leq (2+o(1))\lg n$ unimportant colours can distinguish at most $6 \log^2 n / p = o(n)$ vertices, since $pn \gg \log^2 n$. As a consequence of the claim, we may concentrate on important colours from now.

\bigskip

Suppose that a colouring $c : I \to [k]$ using $k$ important colours is fixed; $I \subseteq V$ is the set of vertices coloured with important colours. Moreover, let us fix a set $U \subseteq V$ of $O( \log^2 n / p) = o(n)$ vertices that are (possibly) distinguished by unimportant colours. Our goal is to estimate the probability $q(c,U)$ that important colours distinguish endpoints of edges in $G[V \setminus (I \cup U)]$, which is the graph induced by those vertices that are coloured with unimportant colours and that are adjacent to at least one vertex from each unimportant colour class. Since the number of configurations to investigate is at most 
$$
{ n \choose \frac {2 \log n}{p} }^{O(\log np)} { n \choose O( \frac {\log^2 n}{p} ) } \le \left( np \right)^{O(\log^2 n/ p)} = \exp \left( O \left( \frac {(\log n)^2 (\log (np)) }{p} \right) \right),
$$
it is enough to estimate $q(c,U)$ from above by, say, 
\begin{equation}\label{eq:union}
Q = Q(p) := \exp( - (\log^2 n)(\log^{3/2} (np)) / p ).
\end{equation}
The result will then follow immediately by the union bound.

\medskip

The expected number of edges in $G[V \setminus (I \cup U)]$ is ${(1-o(1)) n \choose 2} p \ge n^2 p / 3 \gg n \log^2 n$, and so it follows from Chernoff's bound that with probability at most $\exp( - n \log^2 n) \le Q/2$ the number of edges is smaller than, say, $n^2 p / 4$. On the other hand, if the maximum degree in $G[V \setminus (I \cup U)]$ is larger than $2pn$ (for some configuration $(c,U)$), then we stop the whole argument and claim no lower bound for $\chi_s(G)$. Recall that at the beginning of the proof, we showed that a.a.s.\ $\Delta(G) \le 2pn$. Clearly, if this is the case, then (deterministically) the degree of each vertex in $G[V \setminus (I \cup U)]$ is at most $2pn$. Hence, we may condition on the event that the graph $G[V \setminus (I \cup U)]$ has the following two properties: (i) the number of edges is at least $n^2p/4$, and (ii) no vertex has degree more than $2pn$. It is important that no edge between $V \setminus (I \cup U)$ and $I$ has been exposed yet.

\bigskip

Let us focus on constant $p$ first. Suppose that the number of important colours is equal to 
$$
k := r \lg n + \frac {5 \log \log n}{\log s} = r \lg n - O(\log \log n) \sim r \log n,
$$
where the error term follows from~\eqref{eq:sp} and from our assumption that $p \leq 1-\eps$ from which we get, as before, $s \leq ((1-\eps)^2 + 1)/2 < 1$. Suppose that for a given configuration $(c,U)$, the probability $p(x,y)$ that two adjacent vertices $x,y$ from $V \setminus (I \cup U)$ are not distinguished by important colours is $t^k$ for some $t \ge s$. (Recall that $s^k$ is the lower bound for $p(x,y)$ which can be attained when all colour classes have size $\ell_0$.) We will use Suen's inequality to obtain an upper bound for $q(c,U)$. Let 
$$
\I = \{ (x,y) : x, y \in V \setminus (I \cup U), \, x \neq y, xy \in E \}. 
$$
For any $(x,y) \in \I$, let $A_{x,y}$ be the event (with the corresponding indicator random variable $I_{x,y}$) that the neighbourhood colour sets (restricted to important colours only) of $x$ and $y$ are equal. Let $X = \sum_{(x,y) \in \I} I_{x,y}$. We wish to estimate the probability that $X=0$. Denote by $\kappa_i$ (for $1\le i\le k$) the number of vertices coloured by colour~$i$.

\medskip

Suppose first that $t=s$. This means
\begin{equation}\label{eq:Axy}
\Prob(A_{x,y}) = \prod_{i=1}^k \left( [(1-p)^{\kappa_i}]^2 + [1-(1-p)^{\kappa_i}]^2 \right) = s^k.
\end{equation}
In this case,
$$
\mu = s^k \cdot | \I | \ge s^k \cdot \frac {n^2 p}{4} = s^{r \lg n} \log^5 n \cdot \frac {n^2 p}{4} =  \frac {p \log^5 n}{4} \gg \log^4 n,
$$
where the last equality follows from the definition of~$r$.  Observe that for $(x,y)$ and $(y,z)$ in $\I$ we get
\[
\Prob(A_{x,y}A_{y,z}) = \prod_{i=1}^k \left( [(1-p)^{\kappa_i}]^3 + [1-(1-p)^{\kappa_i}]^3 \right).
\]
Thus, since the number of pairs $(x,y)$ and $(y,z)$ is bounded by $| \I | \cdot 4pn$, we have
\begin{align*}
\Delta &\le | \I | \cdot 4pn \cdot \prod_{i=1}^k \left( [(1-p)^{\kappa_i}]^3 + [1-(1-p)^{\kappa_i}]^3 \right)\\
&= | \I | \cdot s^k \cdot 4pn \cdot \frac{\prod_{i=1}^k \left( [(1-p)^{\kappa_i}]^3 + [1-(1-p)^{\kappa_i}]^3 \right)}{\prod_{i=1}^k \left( [(1-p)^{\kappa_i}]^2 + [1-(1-p)^{\kappa_i}]^2 \right)}\\
&=\mu \cdot 4pn \cdot \frac{\prod_{i=1}^k \left( [(1-p)^{\kappa_i}]^3 + [1-(1-p)^{\kappa_i}]^3 \right)}{\prod_{i=1}^k \left( [(1-p)^{\kappa_i}]^2 + [1-(1-p)^{\kappa_i}]^2\right) }.
\end{align*}
Now, using~\eqref{eq:Axy}, we apply Corollary~\ref{cor:ratio} with $x_i = (1-p)^{\kappa_i}$ if $(1-p)^{\kappa_i}>1/2$, and $x_i = 1-(1-p)^{\kappa_i}$ otherwise, to get
\[
\Delta \le \mu \cdot 4pn \cdot \left( \frac {3s-1}{2s} \right)^k.
\]
Now we will prove that $\Delta\ll \mu$. Using Taylor expansion at $s=1$, one can show that for any $s \in [1/2,1]$ we have
\[
\frac {\left(\frac {3s-1}{2s} \right)}{\sqrt{s}} = \frac {3s-1}{2s^{3/2}} = 1 - \frac 38 (1-s)^2 - \frac 58 (1-s)^3 - \frac {105}{128} (1-s)^4 + \ldots \le 1 - \frac 38 (1-s)^2 .
\]
Furthermore, since \eqref{eq:sp} together with $p\le 1-\eps$ implies
\[
s \le \frac{p^2+1}{2} \le \frac{p+1}{2} \le 1-\frac{\eps}{2},
\]
we get $\left(\frac {3s-1}{2s} \right) / \sqrt{s} \le 1 - 3\eps^2/32$, and hence
\begin{eqnarray*}
\Delta &\le& \mu \cdot (4np) s^{k/2} \left( 1 - \frac{3}{32} \eps^2 \right)^k \\
&=& \mu \cdot (4np) \Big( s^{(r \lg n)/2} \log^{5/2} n \Big) n^{-\Omega(\eps^2)} \\
&=& \mu \cdot (4p \log^{5/2} n) n^{-\Omega(\eps^2)} \ll \mu.
\end{eqnarray*}
Finally, 
$$
\delta \le 2 (2pn) s^k = 4pn^{-1} \log^5 n = o(1).
$$
It follows from Suen's inequality (see~(\ref{eq:Suen})) that 
$$
\Prob(X=0) \le \exp \left( - \mu + \Delta e^{2\delta} \right) = \exp (- (1+o(1)) \mu) \le Q/2,
$$
and $q(c,U) \le Q/2 + \Prob(X=0) \le Q$, as needed (see~\eqref{eq:union}). 

\medskip

Suppose now that $t>s$. In this case, $\mu$ is larger than before but, unfortunately, $\Delta$ grows faster than $\mu$ (as $t$ grows) and eventually becomes larger than $\mu$. In order to avoid this undesired situation, we make the dependency graph sparser so that $\mu$ is still of order $\log^5 n$. Let us note that if $t \ge u$, where $u$ is defined so that $(u/s)^k = pn$, then Suen's inequality can be avoided and we can simply use Chernoff's bound to obtain the desired upper bound for $\Prob(X=0)$: indeed, it follows from the claim proved above that there exists a matching in $G[V \setminus (I \cup U)]$ consisting of at least $n/3$ edges; otherwise, the remaining $n-O(\log^2 n/p)-2n/3=n/3-o(n) \gg \log n / p$ vertices in $V \setminus (I \cup U)$ would form an independent set that could be split into two sets of equal size and, clearly, no edge will be present between them, contradicting the claim. Since the events are now independent, and the expected number of edge endpoints not distinguished is
$$
\mu \ge \frac {t^k n}{3} = \frac {s^k (t/s)^k n}{3} \ge \frac {s^k n^2 p}{3} = \frac {p \log^5 n}{3} \gg \log^4 n,
$$
the desired bound holds, since $\Prob(X=0) \le \exp(-\Omega(\mu))$ by Chernoff's bound.

\medskip

It remains to consider the case $s < t < u$. Our goal is to scale the degree in the dependency graph down by a multiplicative factor of 
$$
\xi = \xi(t) := (s/t)^k \ge (s/u)^k = 1/(pn). 
$$
Let $\I_{\xi}$ be a random subgraph of $\I$: each $(x,y) \in \I$ is  independently put into $\I_{\xi}$ with probability $\xi$. Since $|\I| \ge n^2 p / 4$, $\E \, |\I_{\xi}| \ge \xi n^2 p / 4 \ge n/4$,  and thus a.a.s.\ $|\I_{\xi}| \ge \xi n^2 p / 5$. Moreover, since the maximum degree in the dependency graph is at most $2pn$, by Chernoff's bound together with a union bound over all vertices, it follows that a.a.s.\ the maximum degree in a random subgraph of it is at most 
$$
\max \{ 4 \xi pn , 10 \log n \} \le  40 \xi pn \log n.
$$
Therefore, the deterministic (non-constructive) conclusion is that there exists a subgraph of the dependency graph with at least $\xi n^2 p / 5$ pairs and the maximum degree at most $40 \xi pn \log n$. We restrict ourselves to this subgraph, stressing one more time that no edge between $V \setminus (I \cup U)$ and $I$ is exposed yet. Now,
$$
\mu = t^k \cdot | \I_{\xi} | \ge s^k \left( \frac ts \right)^k \cdot \frac {\xi n^2 p}{5} = \frac {p \log^5 n}{5} \gg \log^4 n.
$$
Moreover,
\begin{eqnarray*}
\Delta &\le& t^k \left( \frac {3t-1}{2t} \right)^k \cdot | \I_{\xi} | \cdot 2(40 \xi pn \log n) \le \mu \cdot (80 \xi pn \log n) \left( \frac {3t-1}{2t} \right)^k \\
&=& \mu \cdot (80 pn \log n) \left( \frac {3t-1}{2t^2} \right)^k s^k. 
\end{eqnarray*}
Let $h(t) := (3t-1)/(2t^2)$. Note that $h(1/2)=1$, $h(t)$ is increasing on the interval $[1/2,2/3]$ attaining $h(2/3)=9/8$, and then is decreasing on the interval $[2/3,1]$ going back to $h(1)=1$. Hence, if $s \le 2/3$, then $(h(t)s)^k$, as a function of $t$, is maximized for $t=2/3$. Then, as a function of $s$, since $s \le 2/3$, $r=2/\lg(1/s)$, and $k \sim r \lg n$, $(h(2/3)s)^k$ is maximized for $s=2/3$. We are back to the case $t=s$ that we already checked. On the other hand, if $s > 2/3$, then, since $t \ge s$, $h(t)$ and therefore $\Delta$ is maximized again for $t=s$. Hence, in both cases we have $\Delta \ll \mu.$ Finally,
$$
\delta \le 2 (40 \xi pn \log n) t^k = 80 pn (\log n) s^k = 80 pn^{-1} \log^6 n = o(1).
$$
Hence, Suen's inequality can be applied as before, and the proof for constant $p$ is finished.

\bigskip

The case $p=o(1)$ can be verified exactly the same way. In fact, it is slightly easier since $s = 1/2 + O(p^2)$, $r \le 2+o(1)$, and we do not have to worry about an increasing value of $s$ (and therefore, neither about an increasing value of $r$). We point out only the adjustments of the proof. 
Recall that the definition of $r$ is extended to the case $p=o(1)$. The number of colours is now $k=r \lg n$, and we have $(np)^2 s^k = (\log^2 n)(\log^2 (np))$. 

For the case $t=s$, we have
$$
\mu =s^k |\I| \ge s^k \cdot \frac {n^2 p}{4} = \frac {(\log^2 n)(\log^{2} (np))}{4 p} \gg \frac {(\log^2 n)(\log^{3/2} (np))}{p},
$$
as desired for the union bound (see~\eqref{eq:union}). Since now $s = 1/2 + O(p^2)$, the argument for $\Delta$ is much easier:
\begin{eqnarray*}
\Delta &\le& s^k \left( \frac 12 + O(p^2) \right)^k |\I|\cdot 4pn \\
&\le& \mu (4 np) \left( s + O(p^2) \right)^k \\
&=& 4 \mu np s^k \exp(O(p^2 \log n)) \\
&=& 4 \mu (np)^{-1} (\log^2 n) (\log^2 (np)) \exp(O(p^2 \log n)) \\
&\ll& \mu, 
\end{eqnarray*}
since $np \gg (\log^2 n) (\log^2 (np))$. (Note that for $p=\Omega(1/\sqrt{\log n})$ but still $p=o(1)$, we have $\exp(O(p^2 \log n))=n^{o(1)}$. However, this causes no problem, as $(np)^{-1} = n^{-1+o(1)}$ and so $\Delta \ll \mu$. Otherwise, that is, if $p = o(1/\sqrt{\log n})$, we have $\exp(O(p^2 \log n)) \sim 1$ and $\Delta \ll \mu$ follows easily.) Finally, as before, and again using the same lower bound on $np$, we have 
$$
\delta \le 2 (2pn) s^k =  4 (np)^{-1} (\log^2 n) (\log^2 (np))=o(1),
$$
and Suen's inequality can be applied.

Now let us consider the case $t > s$. The definition of $u$ is not affected, and for $t \ge u$ we use Chernoff's bound since the events are independent. The only difference is that the new value of $s^k$ has to be used to get
$$
\mu \ge \frac {t^k n}{3} = \frac {s^k (t/s)^k n}{3} \ge \frac {s^k n^2 p}{3} = \frac {(\log^2 n)(\log^2 (np))}{3p} \gg \frac {(\log^2 n)(\log^{3/2} (np))}{p},
$$
as desired. 

For the case $s < t < u$, the definition of $\xi$ remains the same and again, after adjusting the value of $s^k$ we get $\mu \gg (\log^2 n)(\log^{3/2} (np))/p$. The argument for $\Delta \ll \mu$ is not affected. Finally,
$$
\delta \le 2 (40 \xi pn \log n) t^k = 80 pn (\log n) s^k = 80 (pn)^{-1} (\log^3 n)(\log^2 (np)) = o(1),
$$
provided that $pn \gg (\log^3 n) (\log^2 (np))$. For slightly sparser graphs, that is, when $(\log^2 n) (\log^2 (np)) \ll pn = O( (\log^3 n) (\log^2 (np)) )$, observe that we only have that $\delta = o(\log n)$, but in fact we can show a stronger bound for $\Delta$: it follows that
$$(h(2/3)s)^k=(9/8)^{2 \lg n/\lg(3/2)} n^{-2} \log^5 n=n^{2 \lg(9/8)/\lg(3/2)-2}\log^5 n,
$$
and thus
$$
 (80 pn \log n) (h(t)s)^k =O(n^{2\lg(9/8)/\lg(3/2)-2}\log^{9} n \log \log n)=n^{-\Omega(1)}.
$$
Hence, $\Delta = \mu n^{-\Omega(1)}$, and so $\Delta e^{2 \delta} \le \mu n^{-\Omega(1)} n^{o(1)} = o(\mu)$, as needed for Suen's inequality to be useful. The proof is finished.
\end{proof}

\end{document}